\documentclass[12pt]{amsart}
\usepackage[utf8]{inputenc}
\usepackage{graphicx}
\usepackage{amsfonts}
\usepackage{amsmath,amssymb,amsthm}
\usepackage{tabularray}
\usepackage[numbers]{natbib}
\usepackage{url}
\usepackage[pagewise]{lineno} %\linenumbers
\newtheorem{theorem}{Theorem}
\newtheorem{lemma}{Lemma}
\newtheorem{question}{Question}

\title{Markov Number Graphs Extended to all Integer Triples}
\author{Spencer Scutt, Mark Turpin}
\date{May 2025}

\begin{document}

\begin{abstract}
    We study the graphs generated when the formula for linking Markov triples is applied to general triples of integers.  
    We find there are a finite number of equivalence classes of graphs, each with particular properties.
\end{abstract}

\maketitle

\author{Spencer Scutt}
\address{Enfield, CT 06082}
\email{smartypantsspencer@gmail.com}

\author{Mark Turpin}
\address{Department of Mathematics, University of Hartford, West Hartford, CT 06117}
\email{mturpin@hartford.edu}

\keywords{Markov, graph}

\subjclass[2020]{05C90, 11Z05}

\section{Introduction}

Markov numbers are the solutions to the Diophantine equation
$a^2 + b^2 + c^2 = 3abc$. Markov numbers can be considered individually, or as part of a triple of integers.
For example, The first three Markov triples, in ascending order, are $ (1,1,1), (1,1,2), (1,2,5) $, and the first three Markov numbers are $ 1, 2, 5 $.  
For a comprehensive summary of the Markov numbers, see \cite{Aigner}.  
For connections of Markov numbers to Diophantine approximation, see \cite{Cassels}. 
For a generalization of Markov numbers see \cite{Karpenkov}.

Given any Markov triple, one can take any entry of the triple and produce another triple, which we refer to as a {\it linked} triple.  
We display this as follows:
$$ (a,b,c) \leftrightarrow (a',b,c) \; \mbox{where} \; a' = 3bc-a. $$
The triples we work with are unordered, so $ (a,b,c) = (b,a,c) $, for example.

Given $(a, b, c)$, we may find 3 more linked triples: $(3bc-a, b, c)$, $(a, 3ac-b, c)$, and $(a, b, 3ab-c)$. 
This process of {\it Vieta jumping}, also called {\it root flipping}, allows one to start with any Markov triple and generate all others (of which there are countably many).
The relationships between triples may be organized into a graph: \newline
\includegraphics[width=\columnwidth]{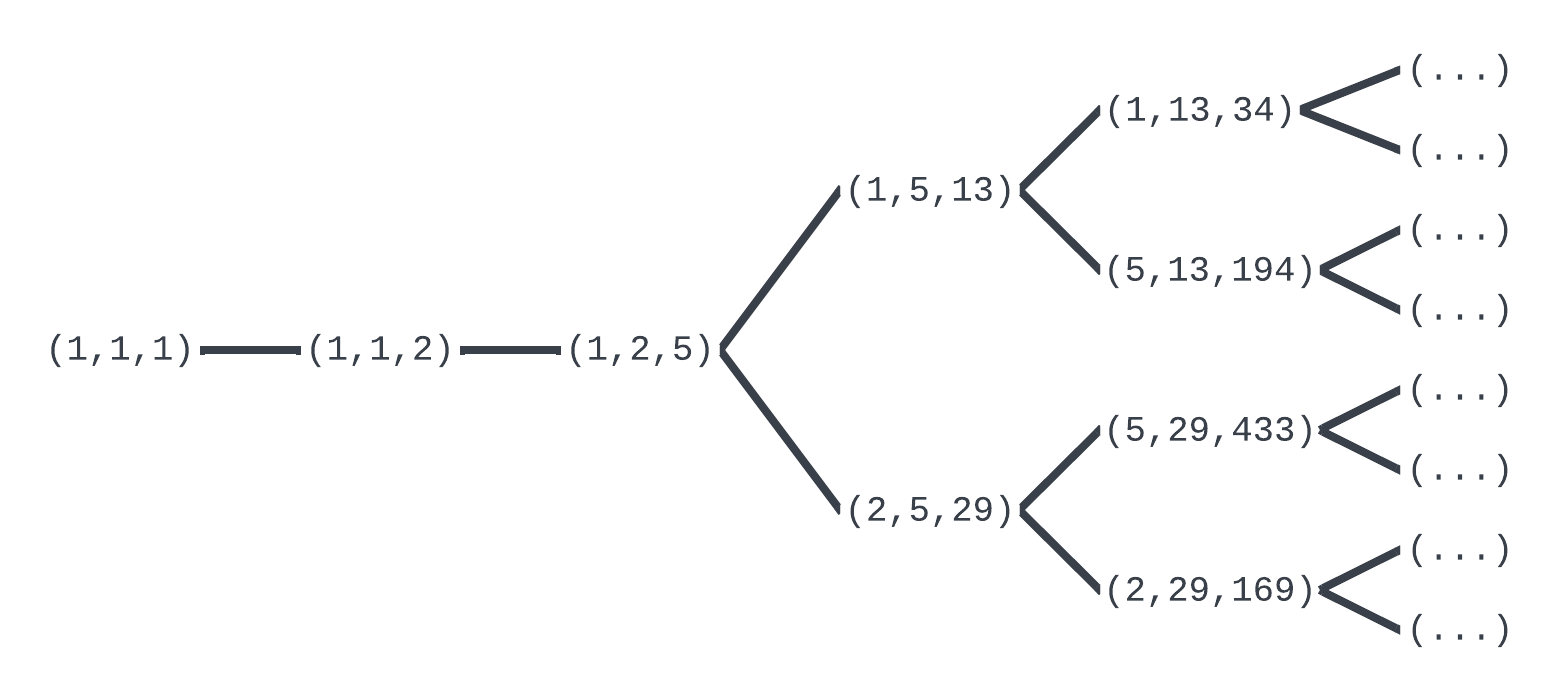}

The vertices of this graph are the Markov triples, and the edges of the graph are the Vieta jumpings.  
We note the Vieta jumpings are invertible, and the directed edges are suppressed.  
For more details on Markov triples and the associated graph, see \cite{Wikipedia}.

\subsection{Arbitrary Application of the Method}
This paper explores the graphs created by the Vieta jumping procedure when starting with arbitrary triples of integers, both positive and negative, not necessarily Markov triples.  For example:

\begin{center}
\includegraphics[scale=.18]{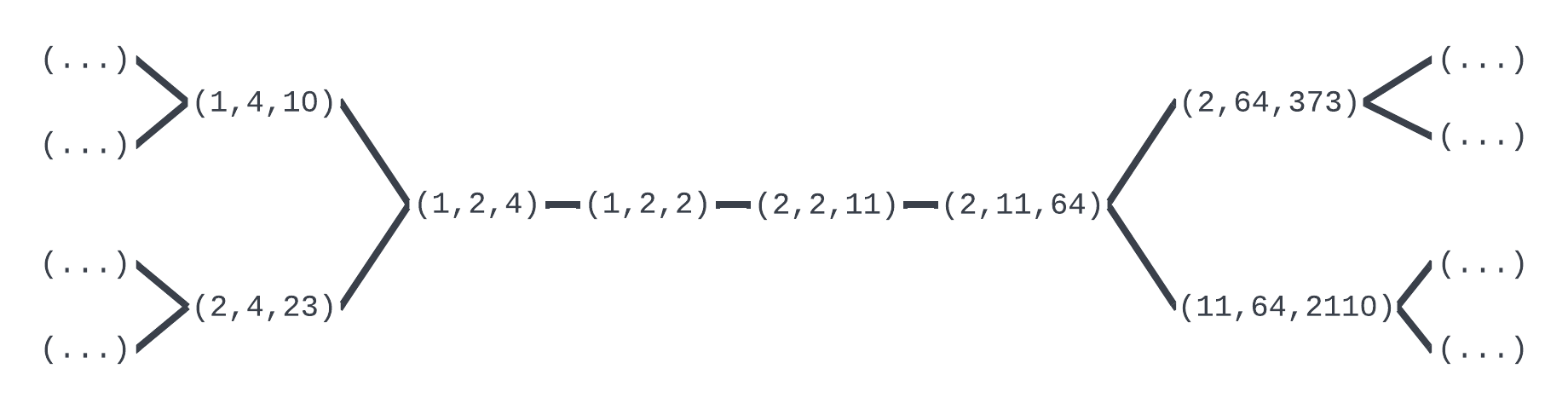}
\end{center}

This graph could have been generated by any of the triples included among the vertices, for example (1, 2, 4).
Note that this graph is not isomorphic to the graph created by the Markov triples (there exists no vertex of degree 1).
We will show there exist exactly nine distinct equivalence classes of graphs, when generated by arbitrary triples of integers.

\section{Fundamental Ideas}
A few fundamental lemmas and terminology must be laid out before classification of graphs can begin.
Note the triples are not ordered. For example, (1,2,4) and (2,4,1) are the same element in the space of triples, and therefore the same vertex in the graph.

\subsection{Seeds}
By starting with an arbitrary triple of integers and applying the Vieta jumping, we generate a graph.  
The arbitrary starting triple is referred to as a {\it seed}.
Any of the triples within this graph could be the seed of the graph, if it had been selected as a starting triple.
By the invertibility of the Vieta jumping, each triple belongs to exactly one graph.
If two graphs, generated by different seeds, are isomorphic, we consider them equivalent.
By taking the entire collection of graphs generated by all possible triples, we can group the resulting graphs into these equivalence classes.  
We show that there exist exactly nine equivalence classes.

\subsection{Norm of a Vertex}
Given vertex $V = (a,b,c)$, we define the {\it norm} of $V$:
$$ ||V|| = |a| + |b| + |c|. $$

\subsection{Bases}
Given a graph and a vertex $ V $ contained in the graph, we say $ V $ is the {\it base} of the graph if and only if $ || V || \le || W || $ for all vertices $ W $ contained in the graph.
This is to say, the base of a graph is the vertex with the minimum norm, which is guaranteed to exist in any set of integer triples.
We note that while every graph has a base, the base might not be unique. \newline
When we show visuals of our graphs, bases are marked in green. However, note that these graphs as mathematical objects are not colored. The green is merely a visual aid for the reader.

\subsection{Graph equivalence}
For our definition of graph equivalence, we use the notion of graph isomorphism of undirected, unlabeled, non-weighted graphs. \cite{Wiki-isomorphism}

\subsection{Infinite Binary Tree}
Many of the graphs we examine will have infinite subgraphs that are locally 3-regular \cite{Wiki-tree} except for one vertex of degree 2.
We call such a subgraph an {\it infinite binary tree}.  
We call the exceptional vertex the {\it root}.

\begin{center}
\includegraphics[scale=.7]{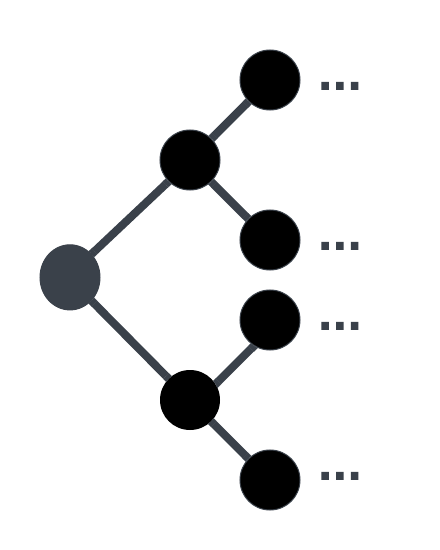}
\end{center}
Here, the leftmost vertex is the root.

\subsection{The Main Theorem}

\begin{theorem}
There exist exactly nine equivalence classes of graphs formed by the Markov Vieta jumpings applied to unordered triples of integers.
\end{theorem}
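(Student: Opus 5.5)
The plan is to run a descent argument on the norm $||\cdot||$. The descent should show that every graph consists of a small ``core'' of vertices of minimal or near-minimal norm, with infinite binary trees (in the sense defined above) hanging off it. The nine classes then come from a finite enumeration of the possible cores. Throughout I use two facts that are immediate from the definitions. First, the quantity $a^2+b^2+c^2-3abc$ is invariant under each Vieta jumping, so every graph lies inside one level set. Second, the involutions $(a,b,c)\mapsto(-a,-b,c)$ (and their permutations) commute with the jumpings, so they map graphs isomorphically onto graphs. The second fact lets me normalize the signs of a seed, for instance so that at most one entry is negative, before doing any case analysis.

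\emph{Step 1 (triples containing zeros).} These I handle directly. The triple $(0,0,0)$ is fixed by every jumping, so its graph is a single vertex. For $a\neq 0$, the triple $(a,0,0)$ is joined only to $(-a,0,0)$, giving a single edge. For $ab\neq 0$, the triple $(a,b,0)$ is joined to $(\pm a,\pm b,0)$ and to $(a,b,3ab)$. Following the sign changes gives a finite cycle-like core, and off each vertex of it hangs a branch generated by the jumping that makes the third entry nonzero. I will check that these branches are infinite binary trees using Step 2.

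\emph{Step 2 (descent lemma for nonzero entries).} Write $V=(a,b,c)$ with $|a|\ge|b|\ge|c|\ge 1$, after the sign normalization. I would prove two things.
\begin{itemize}
\item Jumping either of the two smaller entries strictly increases the norm. For example $|3ac-b|\ge 3|a||c|-|b|>|b|$.
\item Jumping the largest entry, $a\mapsto 3bc-a$, strictly decreases the norm, except in an explicit finite list of small or sign-mixed cases.
\end{itemize}
The second point is the usual Markov-tree computation. It uses the product of the two roots of $x^2-3bcx+(b^2+c^2-k)$ together with the sign of $bc$ relative to $a$.

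Granting these, every non-exceptional vertex has exactly one ``down'' neighbor and two ``up'' neighbors. Its two up neighbors are again non-exceptional and have the same property. Hence each up-edge out of the core starts an infinite binary tree rooted at the core vertex. The graph is therefore determined up to isomorphism by the finite subgraph of exceptional vertices, namely the bases and the vertices tied with them, together with how many up-edges leave each of them.

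\emph{Step 3 (enumeration and distinctness).} I would list all exceptional configurations. These are triples with small absolute values such as $\pm1,\pm2$, triples where two jumps give the same unordered triple, and triples with $2a=3bc$, where a jumping fixes the vertex. For each configuration I would compute the core graph: one base with three up-edges, two or more tied bases adjacent to each other, the degenerate Markov base $(1,1,1)$ with its coincident neighbors, the zero cases of Step 1, and so on. I expect the resulting list to have nine entries. To show that these nine are pairwise non-isomorphic I would use invariants: finiteness, number of vertices of degree less than $3$, the distances between them, and the presence of a vertex of degree $1$.

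The main obstacle is the exceptional set in Step 2. I need to pin down exactly when the down-jump fails to decrease the norm. This happens for mixed signs, for entries equal to $\pm1$, and when $|a|=|b|$ or the jump returns the same triple. Each of these has to be resolved into a core pattern without missing a case or double counting isomorphic cores. I would attack it by bounding: if $|c|\ge 2$ and $|b|\ge 2$, the inequality is clean. Everything else reduces to one-parameter families with $c=\pm1$ or $b=\pm1$, which can be checked by hand.
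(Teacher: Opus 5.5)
The core-plus-trees picture is right, but two steps you rely on are false as stated, and the enumeration that is the actual content of the theorem is not carried out.

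\textbf{Sign normalization.} The map $(x,y,z)\mapsto(-x,-y,z)$ commutes with the jumps on \emph{ordered} triples. On unordered triples, however, it depends on which two entries you flip. When absolute values repeat, it does not carry one graph isomorphically onto another. For instance, $(1,1,1)$ has the single neighbor $(1,1,2)$. By contrast, $(1,-1,-1)$ has the two distinct neighbors $(2,-1,-1)$ and $(1,-1,-2)$, and its graph has two adjacent vertices of degree $2$: it is Class 6, not the Markov graph. Likewise $(2,2,6)$ gives Class 7 while $(2,-2,-6)$ gives Class 8, and $(1,1,-3)$ gives Class 6 while $(1,-1,3)$ gives Class 9. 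This is why the paper's Lemma 2 is only a local statement, assuming distinct absolute values. So you cannot reduce to ``at most one negative entry'' before the case analysis. In the configurations with zeros or equal absolute values, which are exactly where the non-generic classes arise, every sign pattern has to be treated.

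\textbf{Descent lemma.} The Markov root-product argument relies on $k=0$, and here $k$ is arbitrary. For $|a|>|b|>|c|\ge 1$, the jump $a\mapsto 3bc-a$ lowers the norm if and only if $abc>0$ and $3|bc|<2|a|$. For example, $(4,3,2)$ has entries of the same sign, all at least $2$, yet all three neighbors $(14,3,2)$, $(4,21,2)$, $(4,3,34)$ have larger norm. So local minima are generic. The failure set is not a finite list, is not confined to small or sign-mixed triples, and does not reduce to families with an entry $\pm1$; your claim that the inequality is clean once $|b|,|c|\ge 2$ is false.

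What you need instead is the $k$-free fact of the paper's Lemma 1. At a vertex with nonzero entries whose largest absolute value is attained only once, the other two jumps go strictly up and start infinite binary trees. This settles every vertex with distinct nonzero absolute values at once. Such a vertex either has a unique lower neighbor, or is a base with three up-edges (Class 9), or is fixed by its largest jump ($2a=3bc$) and has two up-edges (Class 8). Every other core comes from zeros or equal absolute values. The classification must therefore be organized around those configurations, as the paper does with its base forms $(0,0,a)$, $(0,a,b)$, $(0,a,a)$, $(a,a,a)$, $(a,a,b)$, $(a,a,3a^2/2)$. Your Step 3 defers exactly this enumeration (``I expect the resulting list to have nine entries''), so as written the proposal does not establish the count.
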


The following table lists the nine equivalence classes. 
The diagrams on the right fully show the structure of each graph, recalling that a line ending in ellipses denotes the root of an infinite binary tree. The triplets on the left demonstrate the best-known mathematical generalization of bases belonging to each class.
Recall that bases are given in green as a visual aid, and this visual aid is not mathematically significant.

\begin{table}[ht]
    \centering 
    \resizebox{\textwidth}{!}{
    \begin{tabular}{|c|c|c|}
    \hline
        Class & Base(s) & Graph \\
    \hline \hline
        1 & $ (0,0,0) $ & \includegraphics[scale=0.14]{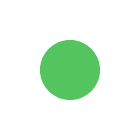} \\
    \hline
        2 & $ (0,0,a), $ $ (0,0,-a) $ & \includegraphics[scale=0.18]{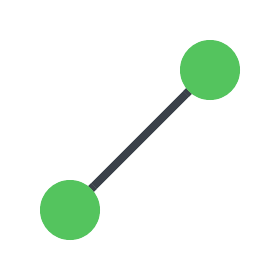} \\
    \hline
        3 & $ (0,a,b), $  $ (0,-a,b), $  $ (0,-a,b), $  $ (0,-a,-b) $ & \includegraphics[scale=0.14]{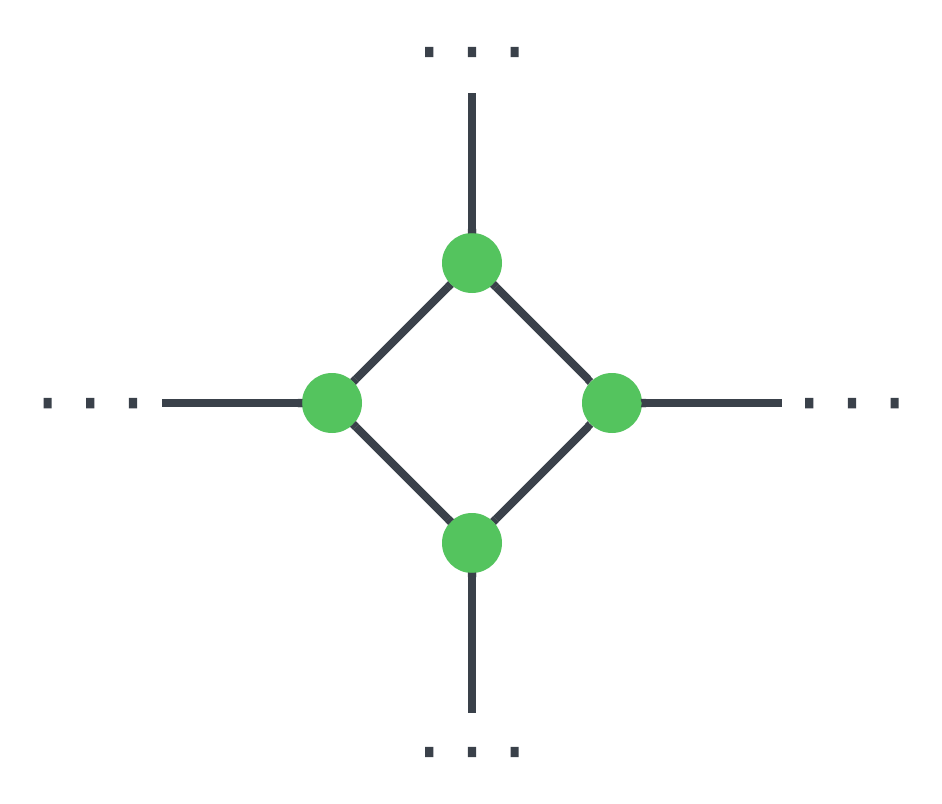} \\
    \hline
        4 & $ (0,a,a), $ $ (0,a,-a), $ $ (0,-a,-a) $  & \includegraphics[scale=0.18]{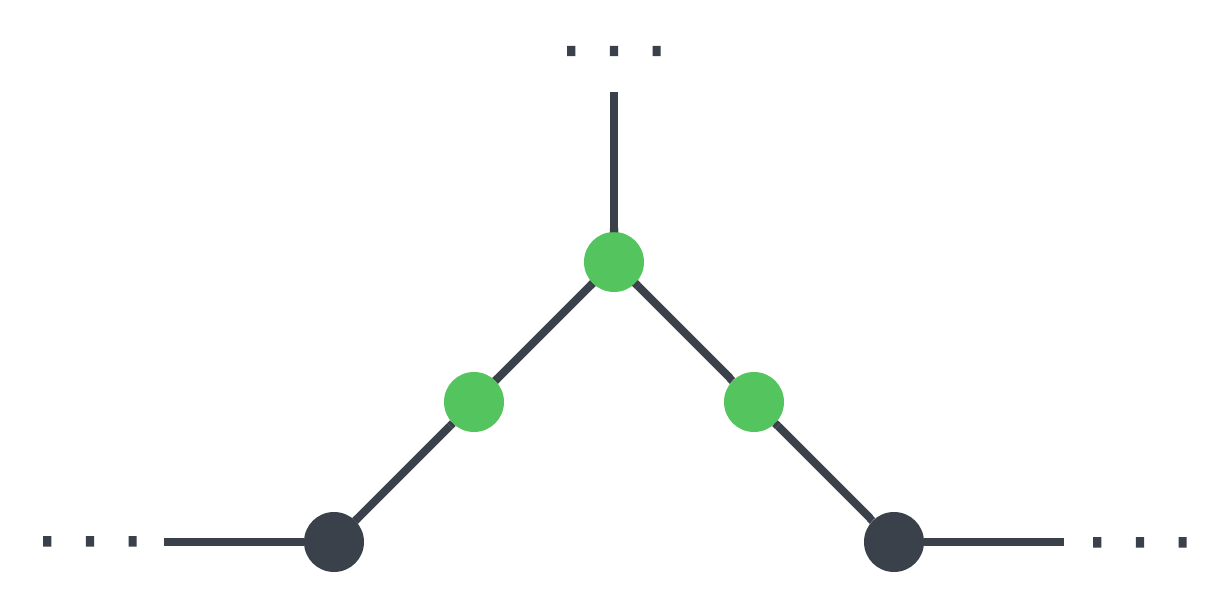} \\
    \hline
        5 & $ (a,a,a) $ & \includegraphics[scale=0.18]{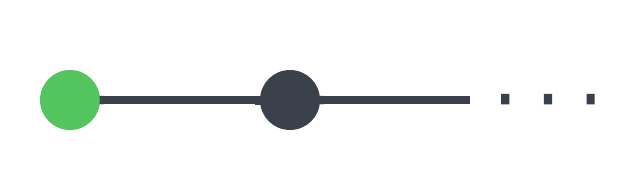} \\
    \hline
        6 & $ (a,a,b) $ & \includegraphics[scale=0.18]{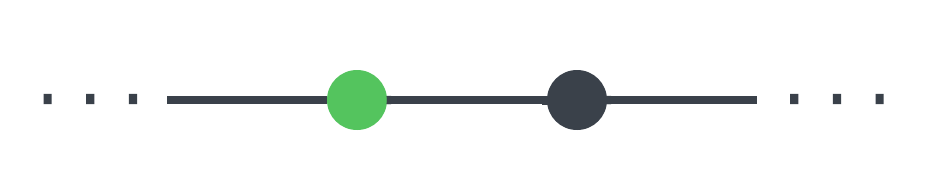} \\
    \hline
        7 & $ (a,a,3a^2/2) $ & \includegraphics[scale=0.18]{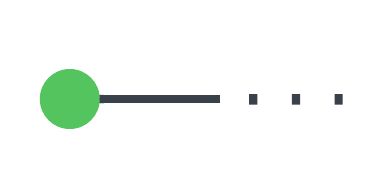} \\
    \hline
        8 & $ (n,2m,3nm) $ where $n,m$ non-zero  & \includegraphics[scale=0.18]{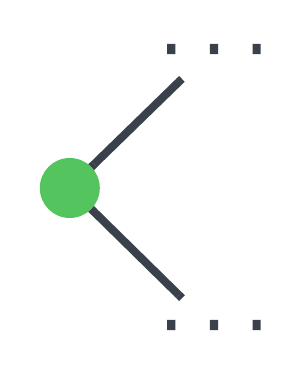} \\
    \hline
        9 & $(a,b,c)$ & \includegraphics[scale=0.18]{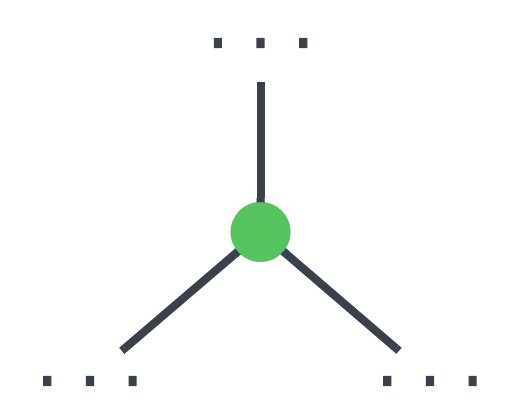} \\
    \hline     
    \end{tabular} } 
\end{table}

In the table, $ a, b, c $ are mutually distinct, non-zero integers. Again, note that the color of vertices are visual aids and not mathematically significant.
\newline * Not every triple of these forms are bases. However, every base takes one of these forms.

\subsection{Two important lemmas}

We begin with two lemmas which we use throughout our classification.

\begin{lemma}
\label{binary}
Given $ (a,b,c) $ with $ 0 < |a| \leq |b| < |c| $, the triples $ (a',b,c) $ and $ (a,b',c) $ satisfy $ |b| < |c| < |a'| $ and $ |a| < |c| < |b'| $ where $ a' = 3bc - a $ and $ b' = 3ac-b $.
\end{lemma}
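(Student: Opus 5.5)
The plan is to apply the reverse triangle inequality to $a' = 3bc - a$ and $b' = 3ac - b$. Integrality and the nonvanishing hypotheses make the products $|bc|$ and $|ac|$ large compared with $|a|$ and $|b|$. The inequalities $|b|<|c|$ and $|a|<|c|$ are already part of the hypothesis $0<|a|\le|b|<|c|$. So the content of the lemma is the two strict inequalities $|c|<|a'|$ and $|c|<|b'|$.

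For $a'$, the reverse triangle inequality gives
$$|a'| \ge 3|b||c| - |a|.$$
Since $|a|\le|b|$, this is at least $3|b||c| - |b| = |b|(3|c|-1)$. Because $b$ is a nonzero integer, $|b|\ge 1$, so $|a'| \ge 3|c| - 1$. Because $c$ is a nonzero integer, $|c|\ge 1$, so $3|c|-1 \ge |c|+1 > |c|$, which gives $|c|<|a'|$.

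For $b'$, the same inequality gives $|b'| \ge 3|a||c| - |b|$. This case needs a little more care, since $|b|$ may exceed $|a|$. Using $|a|\ge 1$ and $|b|<|c|$, we get
$$|b'| \ge 3|c| - |b| > 3|c| - |c| = 2|c| \ge |c|.$$
The middle inequality is strict, so $|b'|>|c|$.

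Both cases are routine. The only place that needs care is the $b'$ bound, where the asymmetry between $a$ and $b$ means we cannot simply reuse $|a|\le|b|$. Instead, the strict hypothesis $|b|<|c|$ together with $|a|\ge 1$ (integrality) carries the argument. The hypothesis $a\neq 0$ is essential here: if $a=0$ then $b'=-b$, and the conclusion fails.
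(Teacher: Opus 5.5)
Your proof is correct and follows essentially the paper's route: the reverse triangle inequality combined with integrality ($|a|,|b|\ge 1$), with your $b'$ case matching the paper's ``similar proof'' via $|b|<|c|$. The only cosmetic difference is in the $a'$ case: the paper bounds $|a|$ by $|c|$ to get $|c|(3|b|-1)>|c|$, which makes both cases symmetric, whereas you bound $|a|$ by $|b|$ and factor out $|b|$ instead.
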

\begin{proof}
$ |3bc-a| \ge 3|b||c|-|a| > 3|b||c|-|c| = |c|(3|b|-1) > |c| $. 
We note: $ 3 |b| - 1 > 1 \Leftrightarrow |b| > 2/3 $, which is always true. 
A similar proof works for $ |b'| > |c| $.
\end{proof}
We interpret this as saying the norm of a linked triple with a greatest element increases, whenever one of the two smaller elements of the triple are replaced.  
Furthermore, starting with $ (a,b,c) $ and having $ |a'| > |c| $ and $ |b'| > |c| $ , and noting that $ |b| < |c| < |a'| $ and $ |a| < |c| < |b'| $, allows us to apply the same lemma to the triples $ (b,c,a') $ and $ (a,c,b') $ giving rise to a infinite binary tree, as seen in the Markov graph above, starting with the root triple $ (1,2,5) $.

Note that circuits are not possible in infinite binary trees, as a circuit would imply the existence of a triple $ V= (a,b,c) , |a| < |b| < |c| $ with two linked triples having norm less than $ V $, violating Lemma \ref{binary}.

Also note the triples $ (a,b,c) $ and $ (a,b,c') , (c' = 3ab-c) $ have no such universal relationship between their norms.

We give another lemma which reveals a symmetry present when the signs of two of three numbers in a triple are switched.
\begin{lemma}
\label{symmetry}
Given triple $ (a,b,c) $ with $ b \neq 0, c \neq 0, |a| \neq |b|, |b| \neq |c|, |a| \neq |c| $, and adjacent triples $ \{ (a',b,c), (a,b',c), (a,b,c') \} $, the triples adjacent to $ (a,-b,-c) $ will be 
$$ \{(a',b,c), (a,-b',-c), (a,-b,-c') \}. $$
\end{lemma}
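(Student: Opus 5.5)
We compute the three Vieta jumps of $(a,-b,-c)$ directly from the definition and compare them with the three jumps of $(a,b,c)$, using only that the product of the two entries not being replaced is unchanged, or changes sign, when two signs are flipped. The replaced entry then transforms accordingly.

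\emph{Replacing the first entry.} The jump of $(a,-b,-c)$ in the first position gives $3(-b)(-c)-a = 3bc-a = a'$. So the linked triple is $(a',-b,-c)$. The statement lists this neighbor as $(a',b,c)$, and I would resolve this by reading it as $(a',-b,-c)$, the triple obtained from the corresponding neighbor of $(a,b,c)$ by the same sign change. That reading is what the symmetry is meant to express: negating the second and third coordinates commutes with every jump.

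\emph{Replacing the second entry.} The jump gives $3a(-c)-(-b) = -(3ac-b) = -b'$, so the neighbor is $(a,-b',-c)$.

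\emph{Replacing the third entry.} The jump gives $3a(-b)-(-c) = -(3ab-c) = -c'$, so the neighbor is $(a,-b,-c')$. These two computations match the stated set exactly.

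The only delicate point is that triples are unordered. I must check that the neighbor set is well defined, i.e.\ that the three jumps are attached to genuinely distinct positions. Otherwise, permuting equal entries could change which coordinate counts as ``$a$''. The hypotheses $|a|\ne|b|$, $|b|\ne|c|$, $|a|\ne|c|$ guarantee that the absolute values $|a|,|b|,|c|$, and hence the entries of $(a,-b,-c)$, are distinct as well. So the labeling of positions in $(a,-b,-c)$ is unambiguous and corresponds to that of $(a,b,c)$. The nonvanishing of $b,c$ ensures the sign flip actually changes the triple, so $(a,-b,-c)\ne(a,b,c)$ as unordered triples.

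I would conclude by noting that the map $\sigma:(x,y,z)\mapsto(x,-y,-z)$, relative to this labeling, intertwines the jump maps. Hence $\sigma$ sends the neighbors of $(a,b,c)$ bijectively onto those of $(a,-b,-c)$. Of these steps, only this well-definedness check needs care; the algebra is one line per coordinate.
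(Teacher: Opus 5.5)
Your argument is correct and is exactly the paper's proof, namely the three one-line identities $3(-b)(-c)-a=a'$, $3a(-c)-(-b)=-b'$, and $3a(-b)-(-c)=-c'$; your well-definedness remark is a harmless extra. You are also right that the first neighbor must be $(a',-b,-c)$ rather than the stated $(a',b,c)$: the paper's own computation and its example $(-2,3,-4)\leftrightarrow(-2,21,-4)$ confirm that the statement contains a typo.
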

\begin{proof}
The calculations are 
$$ a' = 3bc - a = 3(-b)(-c) - a, $$
$$ b' = 3ac - b = -(3a(-c) - (-b)), $$
$$ c' = 3ab - c = -(3a(-b) - (-c)). $$
\end{proof}
The import of Lemma \ref{symmetry} is the establishment of a symmetry.
When two of the three numbers in a triple have their sign switched, the resulting connecting triples share the same property (2 of their signs are also switched).
So for example, the triple (2,3,4) will be linked to three triples (34,3,4), (2,21,4), (2,3,14).  
The triple with two of three signs altered, (-2,3,-4) will be linked to (-34,3,-4), (-2,21,-4), (-2,3,-14).

\section{Classification}
To begin our classification of the graph equivalence classes, we recall that every graph has a base, possibly not unique.
Each base is a (non-ordered) triple of integers.
Each triple of integers either contains one or more 0's, or it does not.

If a triple contains a 0, it will adopt one of the following four forms:
$$ (0,0,0), (0,0,a), (0,a,b), (0,a,a), $$
where $ a $ and $ b $ are integers each non-zero with $ a \neq b $.

If a triple does not contain a 0, it will adopt one of the following three forms:
$$ (a,a,a), (a,a,b), (a,b,c), $$
where $ a, b, $ and $ c $ are non-zero integers with $ a \neq b, b \neq c, $ and $ a \neq c $.

We will work through the seven possible forms for the base of a graph, and for each form we will identify the graph isomorphism class or classes that follow from the chosen base.
For each isomorphism class, we begin with an abstract diagram representing the general form of the graph. These are very similar to those shown in the table on page 4.
We re-explain explicitly the meaning of these diagrams:

Edges drawn between vertices represent a Vieta jumping from one triple to another. Ellipses denote, as on page 4, the root of an infinite binary tree. Green vertices are a visual aid to identify bases to the reader.

We also draw loop edges to represent a Vieta jumping from a triple to itself. 
These edges are provided only for the reader's intrigue, as this proof does not mention them. 
Indeed, we claim that Theorem 1 remains true whether loop edges are included or excluded from the graphs under examination.

Following this abstract diagram, we show an example of a graph composed of triplets that belongs to the class.

\subsection{Base $(0,0,0)$}
When the base of a graph is $ (0,0,0) $, this vertex is the only vertex in the entire graph, as the Vieta jumping links this vertex to itself. 
Calculating $ 3\cdot 0 \cdot 0 - 0 = 0 $ leads to
$$ (0,0,0) \leftrightarrow (0,0,0). $$
\begin{center}
\includegraphics[scale=0.35]{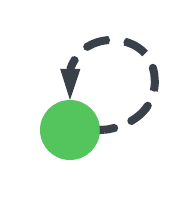}
\end{center}
We call this graph class {\bf Class 1}.
\begin{center}
\includegraphics[scale=0.35]{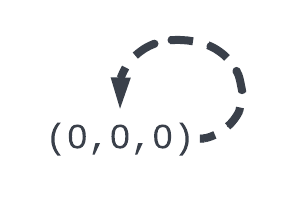}
\end{center}

\subsection{Base $(0,0,a)$ }
Here $ a $ is any non-zero integer.
Calculating $ 3 \cdot 0 \cdot a - 0 = 0 $ and $ 3 \cdot 0 \cdot 0 - a = -a $ leads us to
$$ (0,0,a) \leftrightarrow (0,0,a) \leftrightarrow (0,0,-a) \leftrightarrow (0,0,-a). $$
We call this graph {\bf Class 2}.
\begin{center}
\includegraphics[scale=0.35]{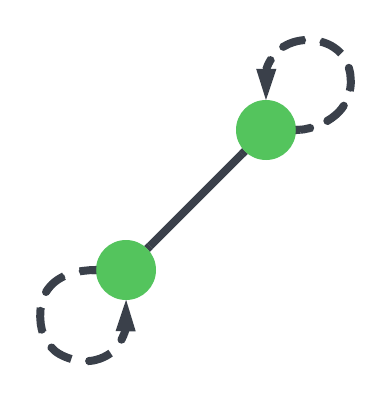}
\end{center}
Class 2 graphs have exactly two bases, namely $ (0,0,a) $ and $ (0,0,-a) $.
\begin{center}
\includegraphics[scale=0.35]{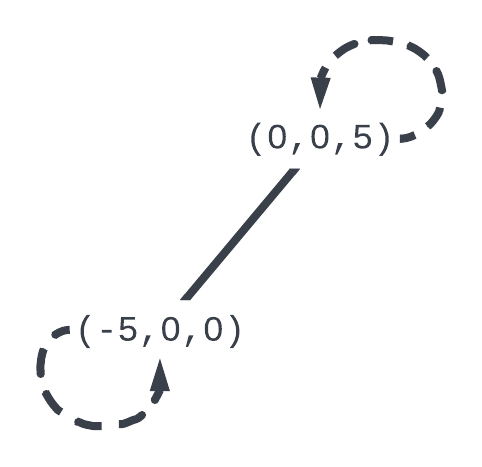}\newline
\end{center}
The graph with bases $(0,0,5)$ and $(-5,0,0)$ is an example.

\subsection{Base $(0,a,b)$ }
Here $ a,b $ are non-zero integers with $|a| \neq |b|$ .
We can calculate the triples linked to our base
$$ (0,a,b) \leftrightarrow (3ab,a,b), $$
$$ (0,a,b) \leftrightarrow (0,-a,b), $$
$$ (0,a,b) \leftrightarrow (0,a,-b). $$
Further calculations reveal four bases, each linked to two other bases:
$$ (0,a,b) \leftrightarrow (0,a,-b) \leftrightarrow (0,-a,-b) \leftrightarrow (0,-a,b) \leftrightarrow (0,a,b). $$

We call this graph {\bf Class 3}.
\begin{center}
\includegraphics[scale=0.2]{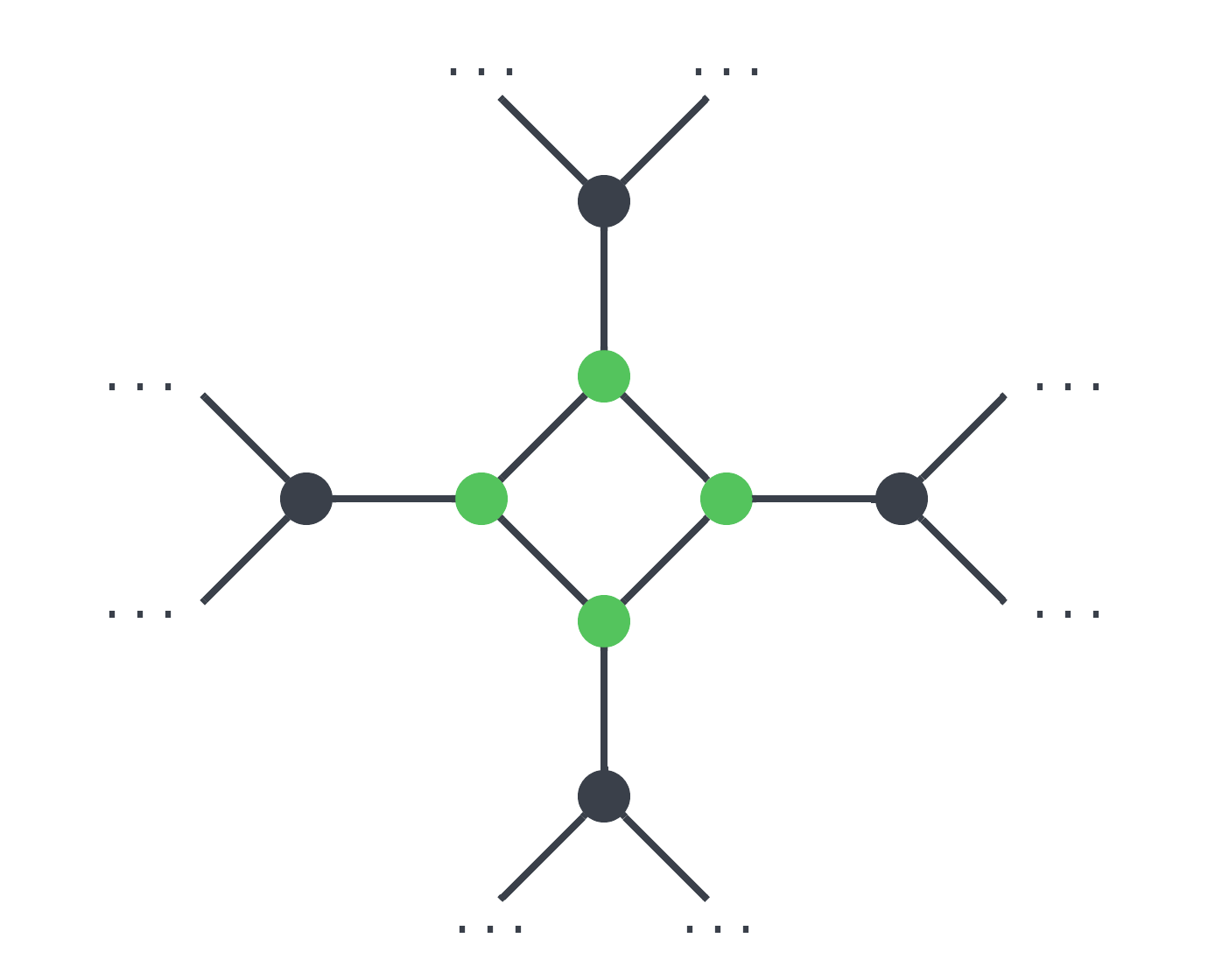}
\end{center}
We establish that the four bases are indeed the vertices with the minimum norms.
Clearly $ ||(0,a,b)|| < ||(a,b,3ab)||$ , but what about the remaining vertices?

It follows from Lemma \ref{binary} that the triples adjacent to the bases, $ (a,b,3ab) $, $ (-a,b,-3ab) $, $ (-a,-b,3ab) $, $ (a,-b,-3ab) $, are the roots of infinite binary trees.

In our Class 3 graph, we have four branches which are themselves not only isomorphic, being infinite binary trees, but also have exactly the same triples, up to the two out of three sign adjustment explained in Lemma \ref{symmetry}.

We note that Class 3 graphs have 4 bases, which will prove to be the most of any class, and these bases form a circuit, also a unique feature.

\includegraphics[scale=0.22]{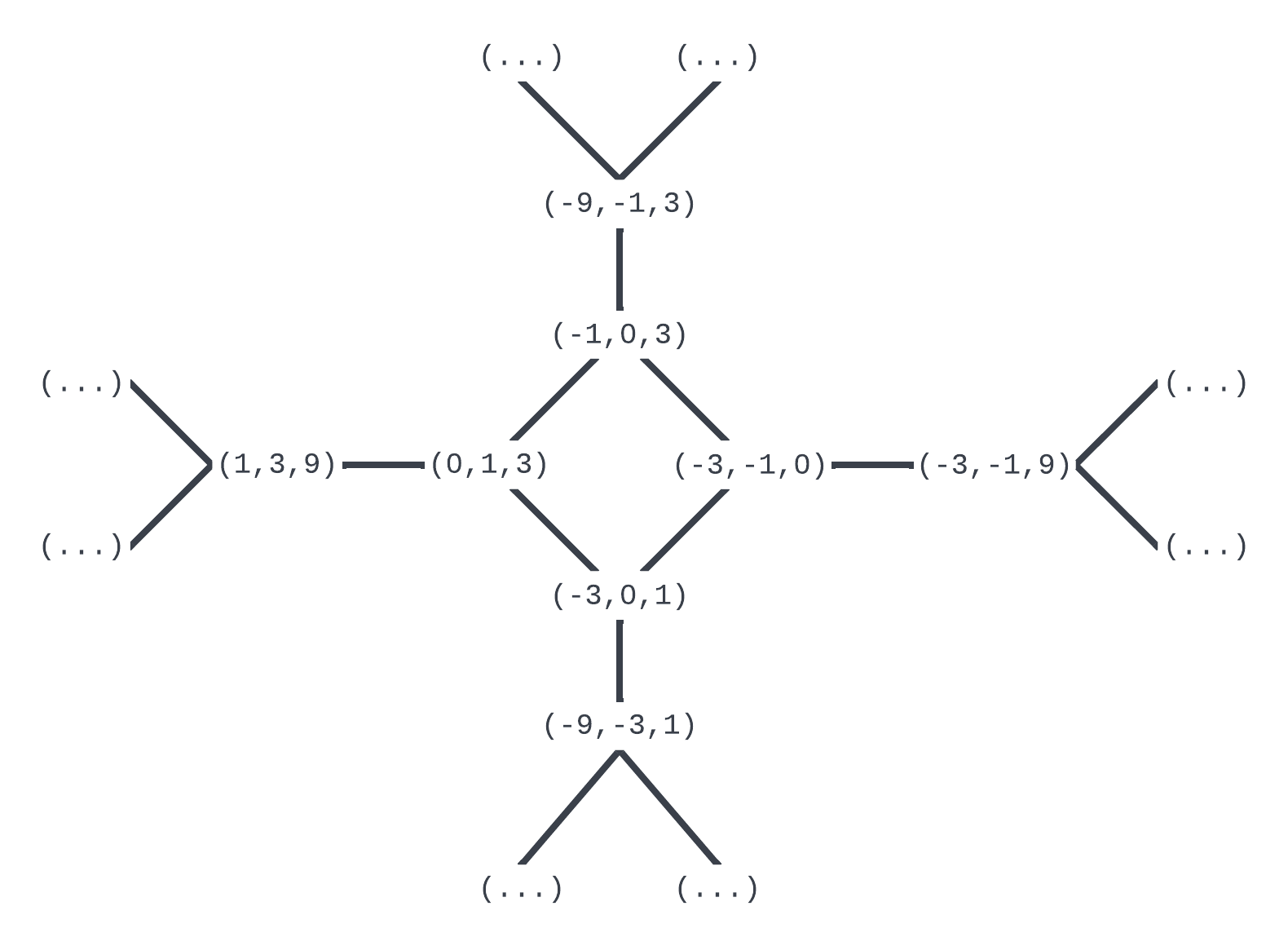}\newline

An example is the graph generated by $(0,1,3)$. These graphs exhibit the symmetry implied by Lemma 2.

\subsection{Base $(0,a,a)$ or $(0,-a,a)$ }
Here $ a \neq 0 $.

Calculating, we can see the following bases are linked:
$$ (0,a,a) \leftrightarrow (0,a,-a) \leftrightarrow (0,-a,-a). $$
We call this graph {\bf Class 4}.

Class 4 graphs could intuitively (but not precisely) be considered a special case of class 3 graphs; when $|a| = |b|$, there are only 3 possible bases instead of 4.

\includegraphics[scale=0.22]{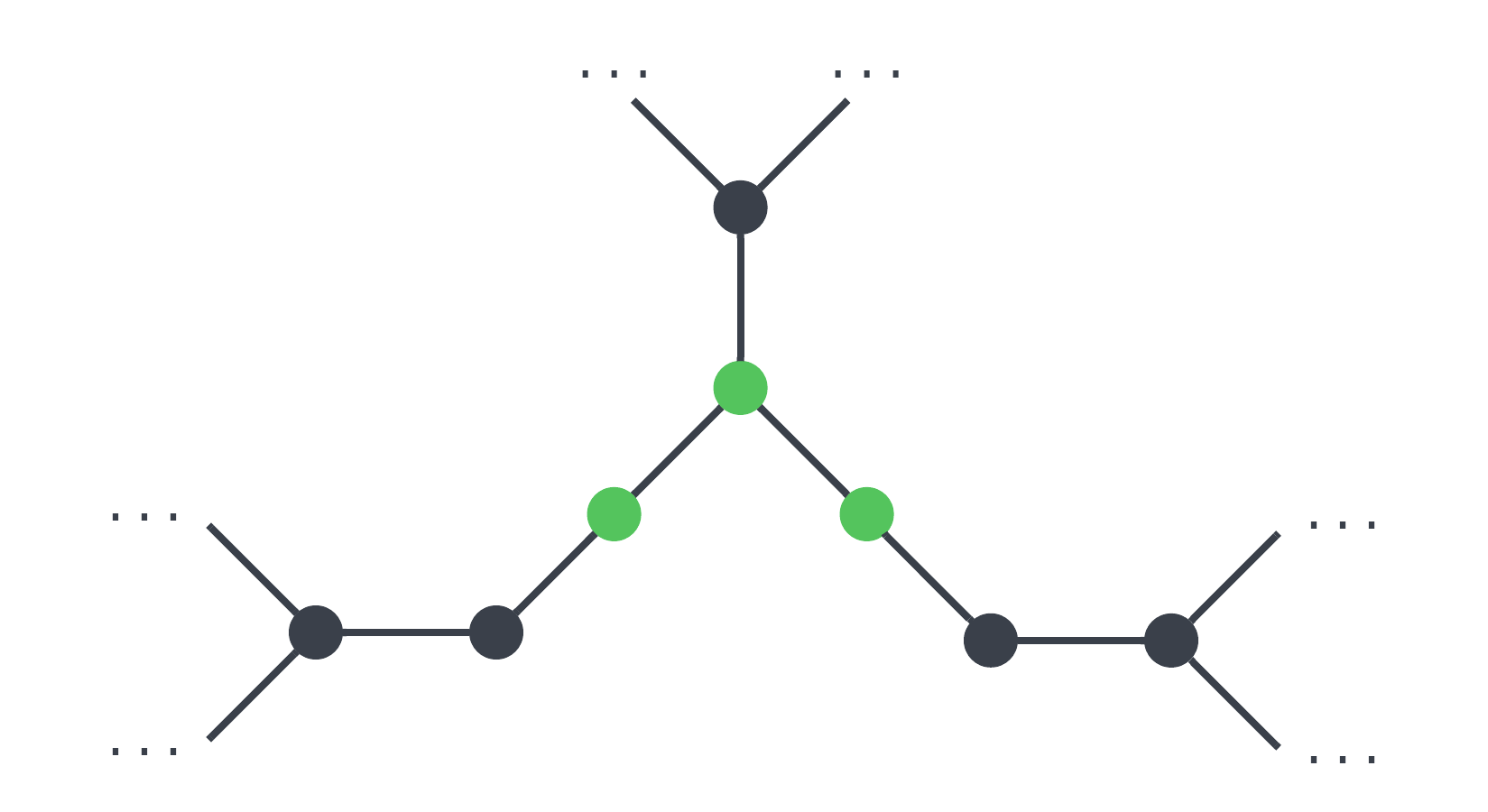}

By Lemma \ref{symmetry} we see the branches of the graph linked to $ (0,a,a) $ and $ (0,-a,-a) $ will be isomorphic. 
Calculating triples linked to $ (0,a,a) $ we find
$$ (0,a,a) \leftrightarrow (3a^2,a,a) \leftrightarrow (3a^2,a,9a^3-a), $$ and similarly
$$ (0,-a,-a) \leftrightarrow (3a^2,-a,-a) \leftrightarrow (3a^2,-a,-(9a^3-a)). $$
Given $ a > 0 $, we establish that for $ a \ge 1 $, we have $ a < 3a^2 < 9a^3 - a $.
By Lemma \ref{symmetry} the branches from $ (0,a,a) $ and $ (0,-a,-a) $ are isomorphic.
From Lemma \ref{binary} we know that $ (3a^2,a,9a^3-a) $ is the root of an infinite binary tree.

For the base $ (0,a,-a) $ we calculate the linked triple
$$ (0,a,-a) \leftrightarrow (-3a^2,a,-a), $$ and by Lemma 1, $(-3a^2,a,-a)$ is the root of an infinite binary tree.

\begin{center}
\includegraphics[scale=0.18]{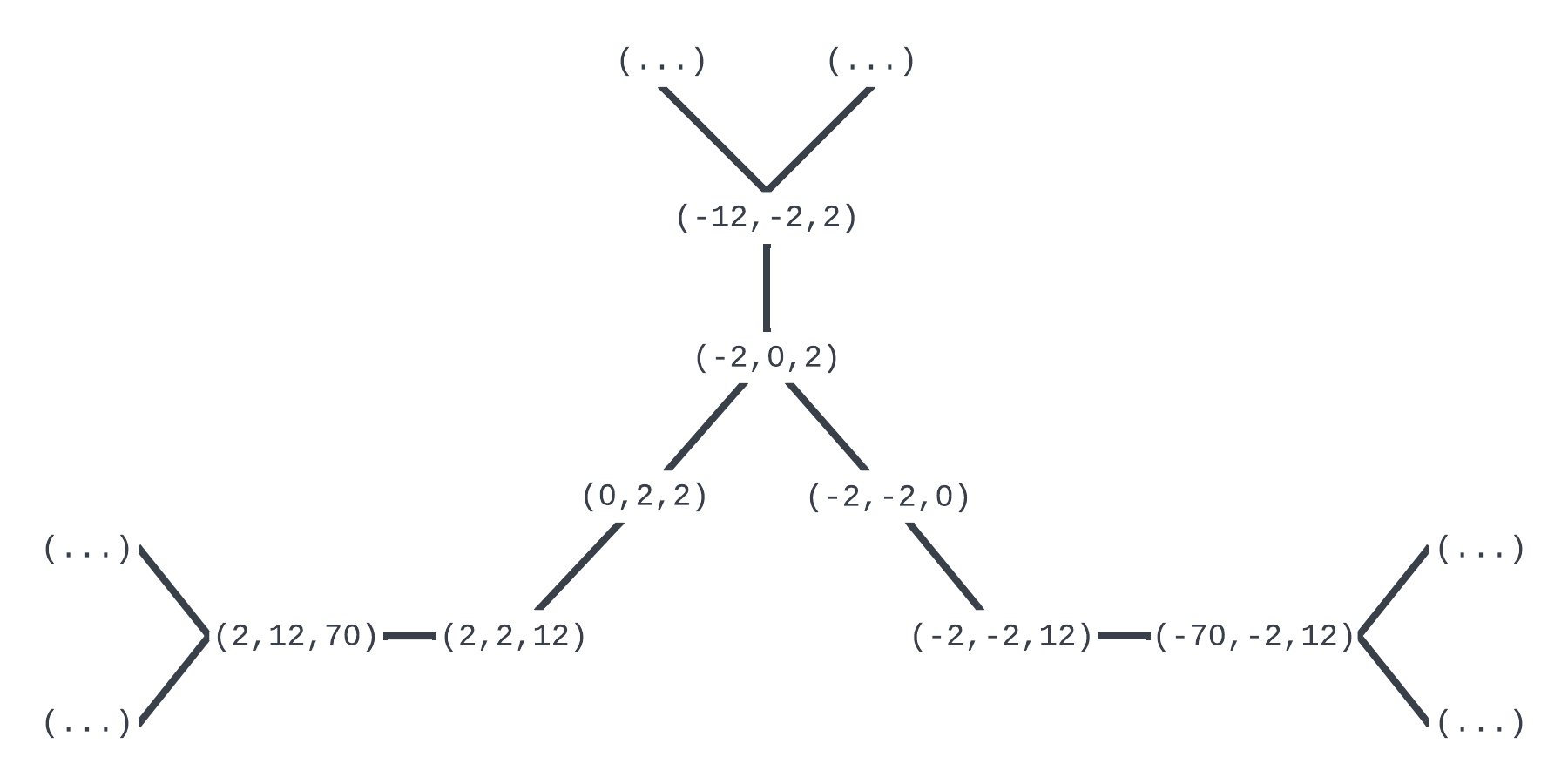}\newline
\end{center}

\subsection{Base $(a,a,a)$ }

Here $ a \neq 0 $ .
Calculating the linked triples we have:
$$ (a,a,a) \leftrightarrow (a,a,3a^2-a) \leftrightarrow (9a^3-3a^2-a,a,3a^2-a). $$ 
Note that for $ a > 0 $,
$$ a < a(3a-1) = 3a^2-a. $$
Also,
$$ 2/3 < a \Rightarrow 6a^2 < 9a^3 \Rightarrow 3a^2 - a < 9a^3 - 3a^2 -a. $$
which means $ (9a^3-3a^2-a,a,3a^2-a) $ is the root of an infinite binary tree by Lemma \ref{binary}.
If $ a < 0 $ then $ (9a^3-3a^2-a,a,3a^2-a) $ is a triple where $ 9a^3-3a^2-a < 0 $, $ a < 0 $, $ 3a^2-a > 0 $, so by Lemma \ref{symmetry} we still have the root of an infinite binary tree.

We call this graph {\bf Class 5}.

\begin{center}
\includegraphics[scale=0.25]{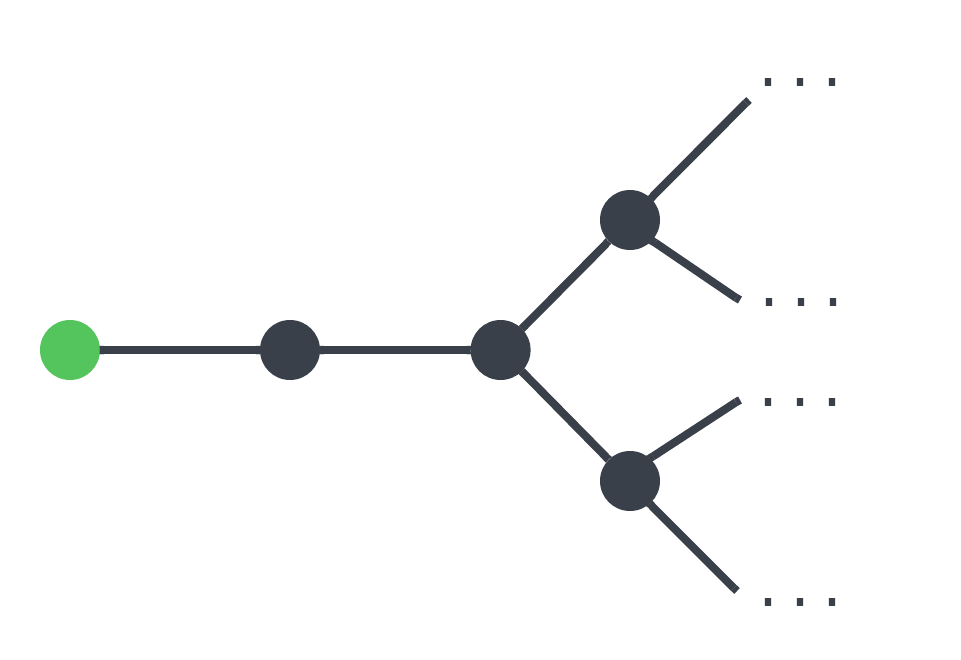}
\end{center}

Classes 5 and beyond contain strictly no zeros. 
Class 5 itself includes the original graph of Markov triples with a base of $ (1,1,1) $.

\includegraphics[width=\columnwidth]{1-1-1}

\subsection{Base $(a,a,b) $}

Here $ a \neq 0, b \neq 0, a \neq b $.
We begin by noting the following linked triples:
$$ (a,a,b) \leftrightarrow (a,a,b') , b' = 3a^2 -b. $$
We observe the following three restrictions:
\begin{itemize}
    \item $ b' \neq 0, (b \neq 3a^2) $ as this graph would have $ (a,a,0) $ as a base and would be a Class 4 graph. 
    \item $ b' \neq a, (b \neq 3a^2 -a) $ as this graph would have $ (a,a,a) $ as a base and would be a Class 5 graph.
    \item $ b' \neq b, (b \neq 3a^2/2) $ this case will be considered later as Class 7.
\end{itemize}

The graph with these restrictions is called {\bf Class 6}.

The relative size of $ |b| $ and $ |b'| $ determine which of the two is the actual base.

In the special case of $ (a,a,-a) $ we have
$$ (-3a^2-a,a,-a) \leftrightarrow (a,a,-a) \leftrightarrow (a,a,3a^2+a) \leftrightarrow (3a(3a^2+a)-a,a,3a^2+a), $$
and each of the end triples is the base of an infinite binary tree from Lemma \ref{binary}.

Next we assume $ |a| \neq |b| $ and look at the linked triples:
$$ (3ab-a,a,b) \leftrightarrow (a,a,b) \leftrightarrow (a,a,b') \leftrightarrow (3ab'-a,a,b'). $$
We see $ |3ab-a| = |a(3b-1)| > |a| $ and $ |a(3b-1)| > |b| $, which allows us to apply Lemma \ref{binary} and determine that $ (3ab-a,a,b) $ is the root of a binary tree.
The same argument applies to the triple $ (3ab'-a,a,b') $ linked to $ (a,a,b') $.

We note the signs of $ a $ and $ b $ do not affect the results.
If $ a < 0 $ and $ b > 0 $ then $ (a(3b-1),a,b) $ will have the sign pattern $ (-,-,+) $, and by Lemma \ref{symmetry} will follow the graph as $ (-a(3b-1),-a,b) $.
If $ a > 0 $ and $ b < 0 $ then $ (a(3b-1),a,b) $ will have the sign pattern $ (-,+,-) $.
If $ a < 0 $ and $ b < 0 $ then $ (a(3b-1),a,b) $ will have the sign pattern $ (+,-,-) $.
In the latter two cases, the same argument applies.

\begin{center}
\includegraphics[scale=0.22]{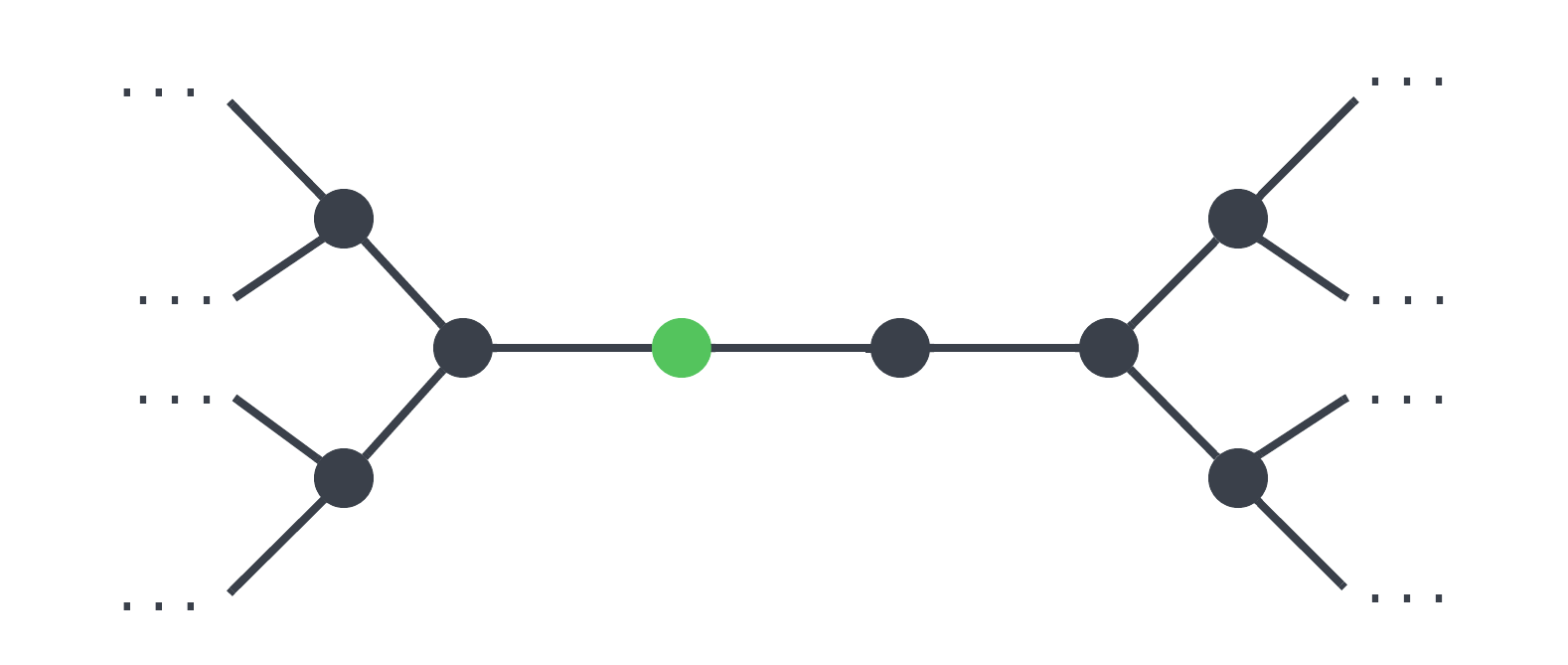}
\end{center}

\begin{center}
\includegraphics[width=\columnwidth]{1-2-2}
\end{center}

The graph with base $(1,2,2)$, shown at the start of the paper, is an example.

\subsection{Base $ (a,a,3a^2/2) $ }

This is the case deferred from the previous section.
From the equation $ b= 3a^2/2 $ we have $ 2b = 3a^2 $ and know $ a $ will be even, and therefore so will $ b $.  
Calculating $ a' = 3ab - a = a(3b-1) $ and $ b' = 3a^2-3a^2/2 = 3a^2/2 = b $, we have the linked triples:
$$ (a,a,b) \leftrightarrow (a,a,b) \leftrightarrow (a(3b-1),a,b). $$
The base is linked to itself, and $ (a,b,a(3b-1)) $ satisfies the conditions of Lemma \ref{binary} and $ |a| < |b| < |a(3b-1)| $ which means it is the root of a infinite binary tree.

We call this graph {\bf Class 7}.

\begin{center}
\includegraphics[scale=0.25]{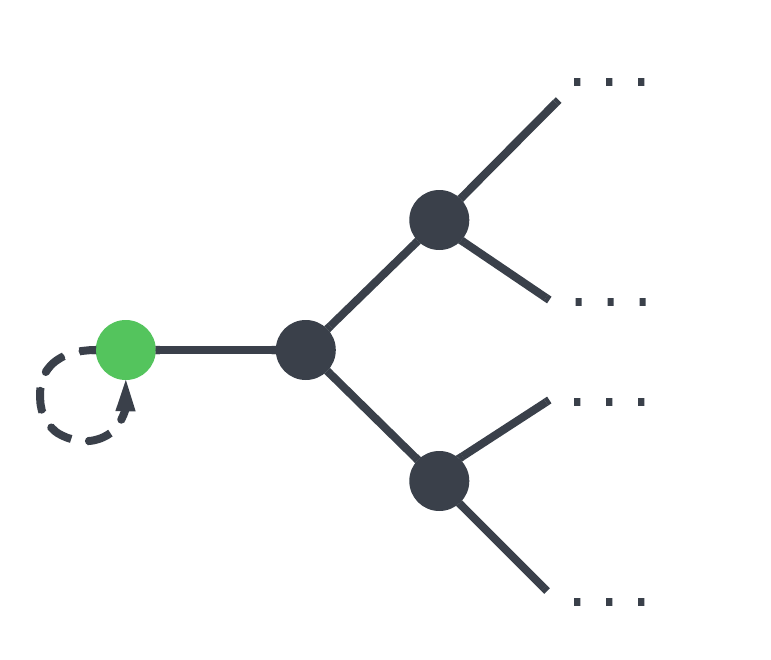}
\end{center}

Class 7 graphs might intuitively (but not precisely) be considered a special case of the soon-to-be-defined class 8 graphs; two entries in the base are equal.

\begin{center}
\includegraphics[width=\columnwidth]{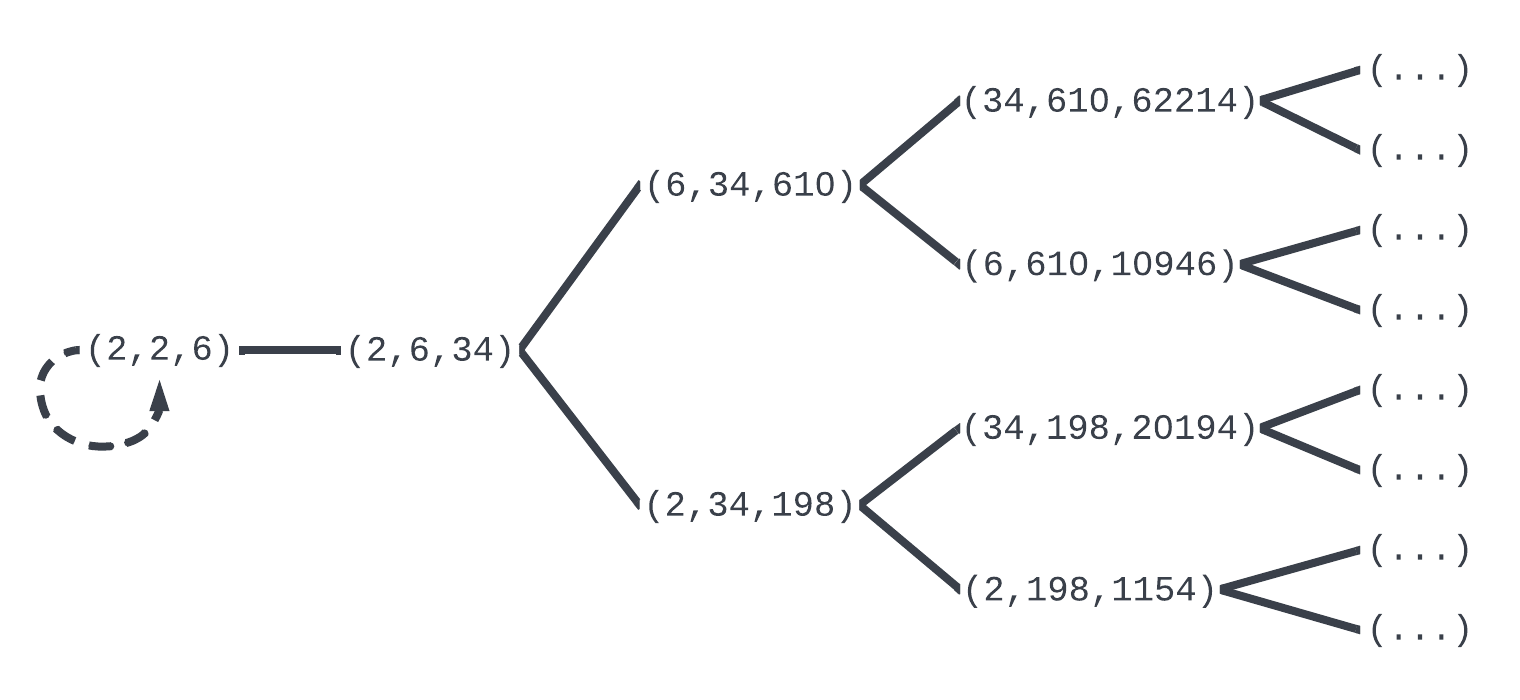}
\end{center}

An example is the graph with base $(2,2,6)$.

\subsection{Base $(n, 2m, 3nm)$}

Here, we use integers $n,m$ instead of $a,b$. This is because $n$ and $m$ need not be distinct, as opposed to $a,b$, which we state are distinct. Again, $n\neq 0$, $m\neq 0$. We restrict $n \not= 2m$, guaranteeing our graph does not belong to Class 7.

Clearly $ |n| < |3nm| $ and $ |2n| < |3nm| $, satisfying the requirements of Lemma \ref{binary}.
Furthermore, letting $c=3nm$,
$$ c' = 3\cdot n\cdot2m-c = 6nm - 3nm = 3nm = c. $$
We conclude that $ (n,2m,3nm) $ is the root of an infinite binary tree.

We call this graph {\bf Class 8}.

\begin{center}
\includegraphics[scale=0.25]{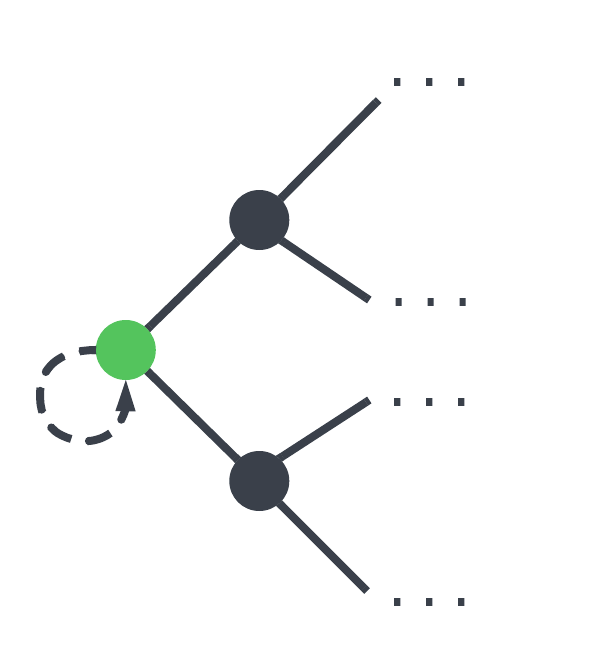}
\end{center}

An example is the graph with base $(1,2,3)$. Here, $n=m=1$.

\begin{center}
\includegraphics[width=\columnwidth]{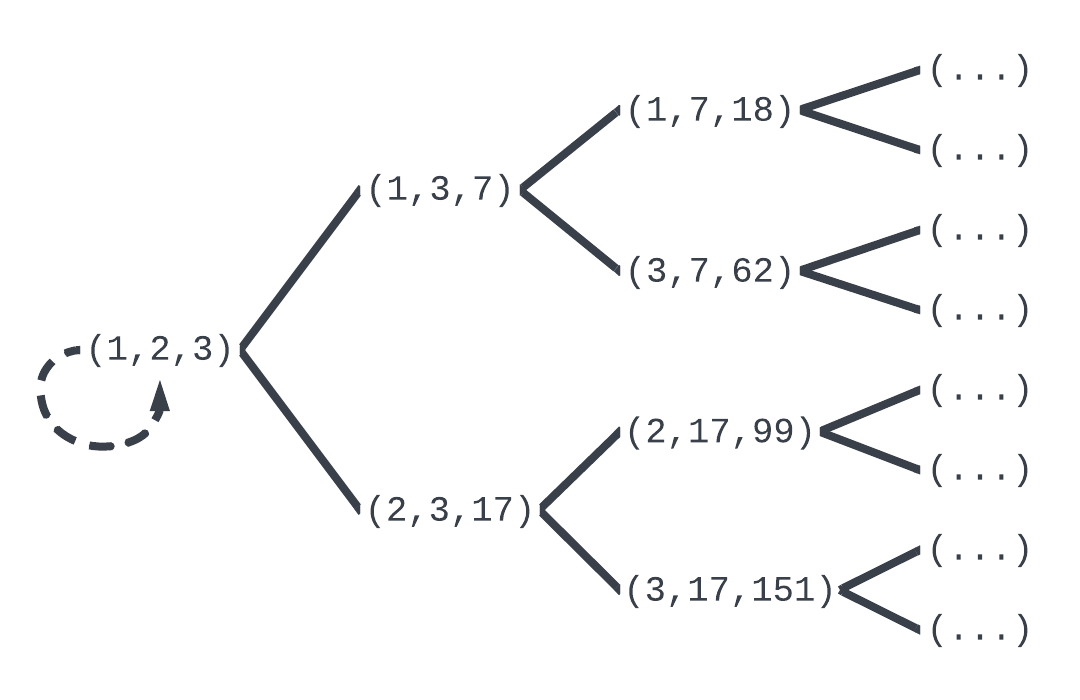}\newline
\end{center}

\subsection{Base $(a, b, c)$}
Here $ a \neq 0, b \neq 0, c \neq 0, a \neq b, a \neq c, b \neq c $, and we are not in Class 8.
Without loss of generality we assume $ |a| < |b| < |c| $.
In all cases we have 
$$ |a'| = |3bc-a| > |c| \; \mbox{and} \; |b'| > |3ac-b| > |c|, $$ 
which was established in Lemma 1.

However, it may be the case $ |c'| = |3ab-c| < |c| $, but in that case we know $ (a,b,c) $ is not a base.  
The precise condition for $ (a,b,c) $ to be a base is $ |3ab-c| \ge |c| $ 
Starting with $ (a,b,c) \leftrightarrow (a,b,c') $ where $ c' = 3ab-c $ we have three possibilities:
\begin{itemize}
    \item $ |c'| < |c|. $
    \item $ |c'| = |c|. $
    \item $ |c'| > |c|. $
\end{itemize}

When $ |c'| < |c| $ we know $ (a,b,c) $ is not a base.

When $ |c'| = |c| $ we have two cases:
\begin{itemize}
    \item $ 3ab-c = c. $
    \item $ 3ab-c = -c. $
\end{itemize}
If $ 3ab-c = c $, we know $ 3ab = 2c $ and either $ a $ or $ b $ are even.  
This would be a case of Class 8.
Here $ (a,b,c) $ has a link to itself, and is also the root of a binary tree.

If $ 3ab-c = -c $, we have either $ a=0 $ or $ b=0 $, which means we are in one of the earlier classes.

When $ |c'| > |c| $ we have $ (a',b,c) $, $ (a,b',c) $, and $ (a,b,c') $ each being the root of an infinite binary tree.  We call this graph {\bf Class 9}.

\begin{center}
\includegraphics[scale=0.25]{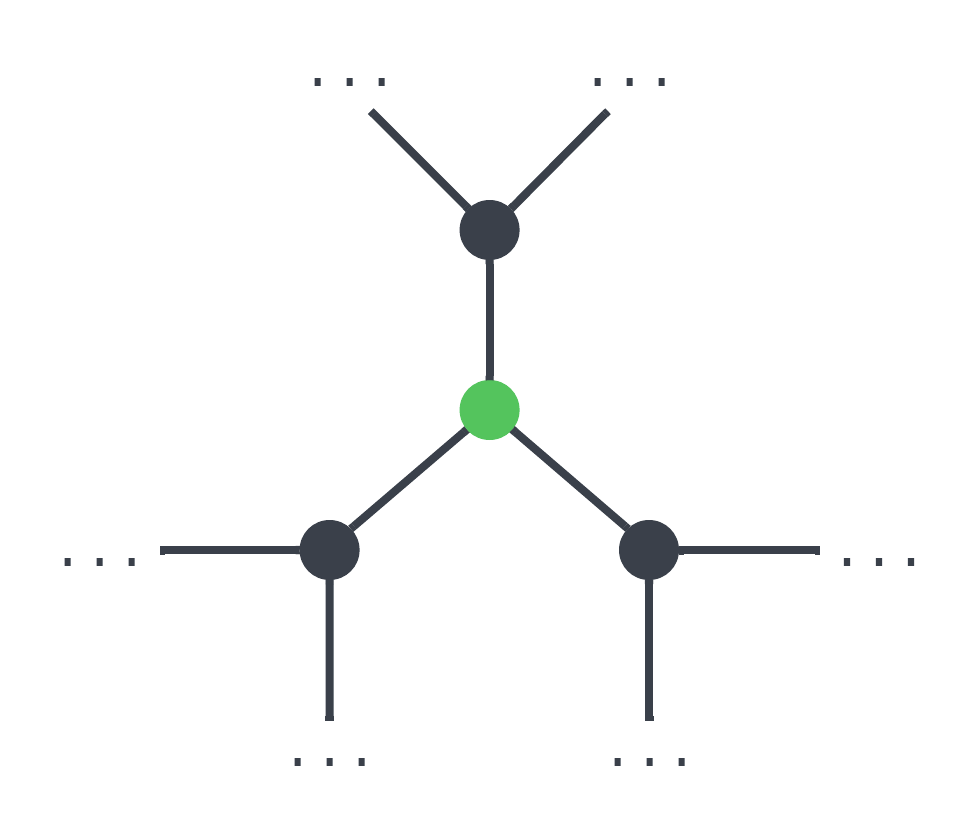}
\end{center}

An example is the graph with base $ (-12,1,3) $.

\begin{center}
\includegraphics[width=\columnwidth]{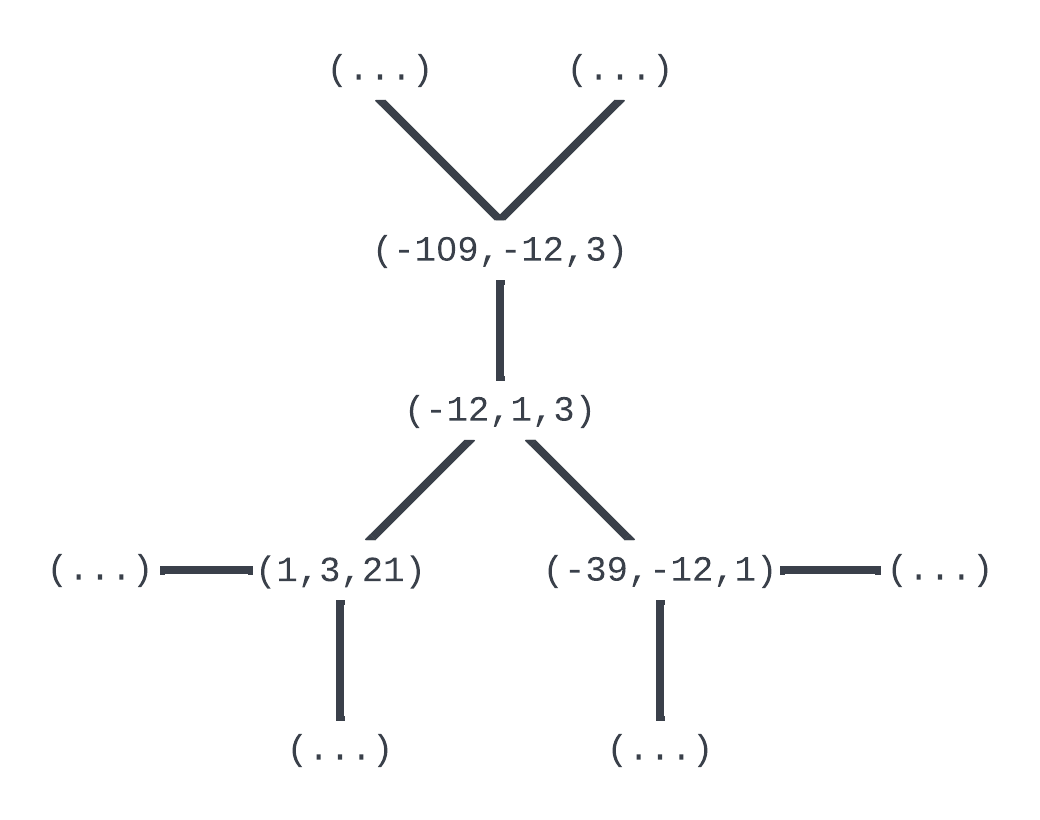}
\end{center}

\subsection{Distinctness of the classes}
To conclude our proof, we present the following argument demonstrating that no two classes are isomorphic. As stated above, this proof, and this argument, exclude loop edges. Were they included, a similar argument applies.

Classes 1 and 2 are the only finite graphs, and they have different numbers of vertices, hence are not isomorphic.

Class 3 is the only graph with a circuit.

Class 4 is the only graph with four vertices of degree 2.

Class 5 is the only graph with one vertex of degree 1, and one vertex of degree 2.

Class 6 is the only graph with exactly two vertices of degree 2.

Class 7 is the only graph with one vertex of degree 1, and no vertex of degree 2.

Class 8 is the only graph with only one vertex of degree 2, and no vertex of degree 1.

Class 9 is the only graph that is a 3-regular tree.

This completes the graph classification.  \qedsymbol

\section{Conclusion}
A reader may find curious the construction of graphs from triples of integers that are not actual solutions to the Markov equation.
Along this line of thought, we note that the Markov equation 
$$ x^2 + y^2 + z^2 = 3xyz $$
and the more general Diophantine equation
$$ x^2 + y^2 + z^2 = 3xyz + k, $$
share the same Vieta jumping relation.
For every integer triple $ (a,b,c) $ there exists a unique integer $ k $ for which the triple is a solution to the above equation.
Coupled with the Vieta jumping observation, this implies that every graph considered in this paper can be linked with a unique $ k $ value.
For example, the above graph with base $(-12, 1, 3)$ gives solutions for $ k=262 $.

It is known that for a given $ k $ value, there may be multiple corresponding graphs, for example consider $ k=81 $ and the graphs generated by the bases $(-1, 4, 4)$ and  $(0, 0, 9)$.
However, it is not known if there exist any values of $k$ with a unique corresponding graph. 
In particular, this is suspected for $k=1$.
We pose a question to build on the results of this paper.
\begin{question}
For which values of  $ k $, if any, are all solutions connected by a unique graph, or a unique class of graphs?
\end{question}

The scope of this investigation was limited to the integers. 
It is known that extending to the real numbers yields new isomorphism classes. 
For example, consider $(1/3, 1/3, 0)$, which generates a finite graph of 6 vertices. 
Extending further to the complex numbers, additional classes are expected, but not confirmed. 
\begin{question}
If triples of rational, real, and/or complex numbers are considered, what is the new extent of the set of graph isomorphism classes?
\end{question}

Other quadratic Diophantine equations may have other Vieta relations.
For a simple example, consider
$$ a^2 + b^2 = c^2 $$
with the relation
$$ (a,b,c) \leftrightarrow (a',b,c) \; \mbox{where} \; a' = -a .$$
\begin{question}
For other quadratic Diophantine equations with Vieta relations, what are the extent of the graph isomorphism classes?
\end{question}

\bibliographystyle{plainnat}
\bibliography{ref}

\end{document}